\newcommand{\qed}{\hfill$\Box$}
\newenvironment{proof}{\noindent {\bf Proof.}}{\qed}
\newtheorem{theorem}{Theorem}[section]
\begin{document}

\baselineskip 0.2in
\parskip      0.1in
\parindent    0em

\title{{\bf On the equivalence of the Mizoguchi-Takahashi locally contractive map to Nadler's locally contractive map} \footnotemark[3]}
\author{
Asrifa Sultana \footnotemark[1]
\and
Xiaolong Qin \footnotemark[2]
}

\date{ }
\maketitle
\def\thefootnote{\fnsymbol{footnote}}

\footnotetext[1]{
\noindent
Department of Mathematics, Indian Institute of Technology Bhilai, Chhattisgarh 492015, India}

\footnotetext[2]{
\noindent
Institute for Fundamental and Frontier Science, University of Electronic Science and Technology of China, China }

\footnotetext[3]{This article is an initial draft of the complete paper}
\begin{abstract}
In this article we have proved the equivalence between the Mizoguchi-Takahashi uniformly~locally~contractive map to the multi-valued map satisfying the Nadler contractive condition uniformly~locally~on a metrically convex space.
\end{abstract}
\pagebreak

\section{Introduction}

In 1969, Nadler \cite{N6} established a multi-valued extension of the famous Banach contraction principle. Let $(X,d)$ be a complete metric space and
$CB(X)$ denote the family of all non-empty closed and bounded subsets of $X$. Let $H$ be the Hausdorff metric with respect to $d$ on $CB(X)$. Nadler \cite{N6} proved that any multi-valued map $F$ from $X$ to $CB(X)$ has a fixed point if there exists some $k \in [0,1)$ such that $H(F(x),F(y))\leq kd(x,y)$ for all $x,y \in X$. An interesting extension of this result was obtained in 1989, by Mizoguchi and Takahashi \cite{MT2}.

\begin{theorem}\cite{MT2} \label{Th 2 Mizoguchi}
Let $(X,d)$ be a complete metric space and let $F:X \rightarrow CB(X)$ be a map such that
\begin{equation}
H(F(x),F(y))\leq \alpha(d(x,y))d(x,y)~\forall~x,y \in X,~x \neq y,\label{eq 1 MT}
\end{equation}
where $\alpha:(0,\infty)\rightarrow [0,1)$ is such that $\limsup_{s \rightarrow t+}\alpha(s)<1$ for every $t\in[0,\infty)$. Then $F$ has a fixed point.
\end{theorem}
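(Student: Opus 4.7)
The plan is to build a Picard-style orbit $\{x_n\}$ with $x_{n+1}\in F(x_n)$, show it is Cauchy, and use completeness of $X$ together with closedness of $F(x^\ast)$ to conclude that the limit is a fixed point. The main technical nuisance is that the distance $d(x,F(x))$ need not be attained and the contraction factor $\alpha(d(x_{n-1},x_n))$ may approach $1$, so we cannot pick the iterates directly from the Hausdorff estimate nor extract a uniform contraction ratio without extra work.

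To circumvent this, I would first introduce the auxiliary function $\beta(t)=\tfrac{1}{2}(1+\alpha(t))$, which satisfies $\alpha(t)<\beta(t)<1$ and inherits the condition $\limsup_{s\to t+}\beta(s)<1$ for every $t\geq 0$. Starting from an arbitrary $x_0\in X$ and $x_1\in F(x_0)$, I would inductively invoke the definition of $H$ and the inequality (\ref{eq 1 MT}) to pick $x_{n+1}\in F(x_n)$ with
\begin{equation*}
d(x_n,x_{n+1})\leq \beta\bigl(d(x_{n-1},x_n)\bigr)\,d(x_{n-1},x_n),
\end{equation*}
which is possible whenever $x_{n-1}\neq x_n$ (and if equality ever holds the orbit is stationary and we are done). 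Since $\beta<1$, the sequence $a_n:=d(x_n,x_{n+1})$ is strictly decreasing and converges to some $r\geq 0$.

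Next I would show $r=0$: if $r>0$, then $a_n\downarrow r$, so $a_{n-1}\to r{+}$, and passing to $\limsup$ in the recursive inequality yields $r\leq (\limsup_{s\to r+}\beta(s))\,r<r$, a contradiction. Once $a_n\to 0$, the hypothesis $\limsup_{s\to 0+}\alpha(s)<1$ gives $\delta>0$ and $c<1$ such that $\beta(s)\leq c$ on $(0,\delta)$; because $a_{n-1}<\delta$ eventually, the recursion becomes a genuine geometric contraction $a_n\leq c\,a_{n-1}$ from some index onward, so $\sum a_n<\infty$ and $\{x_n\}$ is Cauchy.

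The hard part is really the Cauchy step just sketched: without the $\limsup$ hypothesis on $\alpha$, $a_n\to 0$ does not imply summability. With that in hand, let $x^\ast=\lim x_n$, which exists by completeness. Finally I would estimate
\begin{equation*}
d(x^\ast,F(x^\ast))\leq d(x^\ast,x_{n+1})+H(F(x_n),F(x^\ast))\leq d(x^\ast,x_{n+1})+\alpha\bigl(d(x_n,x^\ast)\bigr)\,d(x_n,x^\ast)
\end{equation*}
whenever $x_n\neq x^\ast$ (handling the trivial case $x_n=x^\ast$ separately using $x_{n+1}\in F(x^\ast)$), and let $n\to\infty$ to obtain $d(x^\ast,F(x^\ast))=0$. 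Since $F(x^\ast)\in CB(X)$ is closed, this forces $x^\ast\in F(x^\ast)$.
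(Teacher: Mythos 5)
Your proposal is a correct and complete proof, but there is nothing in the paper to compare it against: Theorem~\ref{Th 2 Mizoguchi} is quoted from Mizoguchi--Takahashi \cite{MT2} and the paper supplies no proof of it (its own contribution is Theorem~\ref{Th 1}, which concerns the locally contractive version). What you have written is essentially the standard streamlined argument (the one given by Suzuki in \cite{Suzuki}): the auxiliary function $\beta(t)=\tfrac{1}{2}(1+\alpha(t))$ gives the strict inequality needed to select $x_{n+1}\in F(x_n)$ from the infimum defining $d(x_n,F(x_n))$, the $\limsup$ condition at $t=r$ rules out $a_n\downarrow r>0$, and the same condition at $t=0$ converts the tail of the recursion into a genuine geometric contraction, which is exactly the point where a bare ``$\alpha<1$'' hypothesis would fail. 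The closing estimate via $d(x^\ast,F(x^\ast))\leq d(x^\ast,x_{n+1})+H(F(x_n),F(x^\ast))$ and closedness of $F(x^\ast)$ is also standard and correct. All steps check out; the only remarks worth making are that you should note explicitly that the selection of $x_{n+1}$ presupposes $x_{n-1}\neq x_n$ (which you do handle by observing the orbit is otherwise stationary), and that the hypothesis $\limsup_{s\to 0+}\alpha(s)<1$ at $t=0$ is precisely what licenses the uniform bound $\beta(s)\leq c<1$ on $(0,\delta)$.
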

It is worth to note that any Mizoguchi-Takahashi contraction on a metric space need not be a Nadler contraction (for example, one can refer \cite{Suzuki,Antony_Veeramani}).

On the other hand, Edelstein \cite{E5} introduced the notion of uniformly locally contractive mapping on a metric space. For a metric space $(X,d)$, a mapping $f:X \rightarrow X$ is called an $(\varepsilon,k)$-\textit{uniformly~locally~contractive} (where $\varepsilon>0$ and $k\in[0,1)$) if $d(fx,fy)\leq kd(x,y)$ for all $x,y \in X$ with $d(x,y)<\varepsilon$. It is worth to note that an $(\varepsilon,k)$-uniformly~locally~contractive need not be a Banach contraction (for example, one can refer \cite{E5,mate}). Edelstein \cite{E5} established the following extension of Banach contraction principle.
  \begin{theorem}\cite{E5} \label{Th edelstein}
 Let $(X,d)$ be a complete metric space. An $(\varepsilon,k)$-\textit{uniformly~locally~contractive} map $f:X \rightarrow X$ has a unique fixed point if $(X,d)$ is $\varepsilon$-\textit{chainable}, that is, for any given $a,b \in X$, there exist $N \in \mathbb{N}$ and a sequence $(y^{i})_{i=0}^{N}$ in $X$ such that $y^{0}=a$, $y^{N}=b$ and $d(y^{i-1},y^{i})<\varepsilon$ for $1\leq i \leq N$.
  \end{theorem}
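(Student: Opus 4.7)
The plan is to reduce the locally contractive hypothesis to a genuinely global Banach contraction condition by constructing an auxiliary metric $d_\varepsilon$ on $X$ in which $\varepsilon$-chainability becomes actual path-connectedness with finite infimum distance. Specifically, I would define, for $x,y\in X$,
\[
d_\varepsilon(x,y)\;=\;\inf\left\{\sum_{i=1}^{N} d(y^{i-1},y^{i})\;:\; y^0=x,\; y^N=y,\; d(y^{i-1},y^{i})<\varepsilon\right\},
\]
where the infimum is over all $\varepsilon$-chains from $x$ to $y$. The $\varepsilon$-chainability hypothesis guarantees the defining set is non-empty, so $d_\varepsilon$ is finite. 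After that, the proof becomes an application of the Banach contraction principle to $f$ on $(X,d_\varepsilon)$.

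First, I would verify that $d_\varepsilon$ is a metric: positivity and symmetry are immediate, while the triangle inequality follows by concatenating chains. Using the ordinary triangle inequality one gets $d(x,y)\leq d_\varepsilon(x,y)$, and taking the one-step chain $x,y$ whenever $d(x,y)<\varepsilon$ gives $d_\varepsilon(x,y)=d(x,y)$ on that local scale. Consequently $d$ and $d_\varepsilon$ induce the same topology, and $(X,d_\varepsilon)$ inherits completeness from $(X,d)$: any $d_\varepsilon$-Cauchy sequence is $d$-Cauchy and hence $d$-converges to some $x$; once it enters the $\varepsilon$-ball around $x$ in the $d$-sense, the local coincidence $d=d_\varepsilon$ promotes convergence to the $d_\varepsilon$-metric.

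Next I would show $f$ is a $k$-Lipschitz contraction on $(X,d_\varepsilon)$. Given any $\varepsilon$-chain $x=y^0,y^1,\dots,y^N=y$, the contractivity hypothesis yields $d(f(y^{i-1}),f(y^{i}))\leq k\,d(y^{i-1},y^{i})<k\varepsilon<\varepsilon$, so the image $f(y^0),f(y^1),\dots,f(y^N)$ is itself an $\varepsilon$-chain from $f(x)$ to $f(y)$. Summing and taking infimum over all chains gives $d_\varepsilon(f(x),f(y))\leq k\,d_\varepsilon(x,y)$. The Banach contraction principle applied to $f:(X,d_\varepsilon)\to(X,d_\varepsilon)$ now yields a unique fixed point, and this point is also the unique fixed point with respect to $d$ because any two $d$-fixed points are joined by an $\varepsilon$-chain and thus would coincide by the $d_\varepsilon$-uniqueness.

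The main subtlety I anticipate is the completeness of $(X,d_\varepsilon)$, which requires carefully exploiting the fact that $d$ and $d_\varepsilon$ agree below the $\varepsilon$-threshold; the rest is essentially bookkeeping, with the chain construction under $f$ being the core mechanism by which the local Lipschitz estimate globalises.
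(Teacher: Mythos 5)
This theorem is quoted in the paper from Edelstein's 1961 article and is not proved there, so there is no in-paper argument to compare against; I have therefore checked your proposal on its own merits, and it is correct. The chain-metric $d_\varepsilon$ is well defined and finite by $\varepsilon$-chainability, the inequality $d\leq d_\varepsilon$ together with the coincidence $d_\varepsilon=d$ below the $\varepsilon$-threshold gives both the metric axioms and the transfer of completeness exactly as you describe, and the key observation that the $f$-image of an $\varepsilon$-chain is again an $\varepsilon$-chain (since each link contracts to length $kd<k\varepsilon<\varepsilon$) correctly globalises the local Lipschitz bound to $d_\varepsilon(f(x),f(y))\leq k\,d_\varepsilon(x,y)$. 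It is worth noting that your remetrization route differs from Edelstein's original argument, which avoids introducing a new metric: there one fixes $x_0$, takes an $\varepsilon$-chain from $x_0$ to $f(x_0)$, and iterates $f$ along it, obtaining $d(f^n(x_0),f^{n+1}(x_0))\leq k^n\sum_i d(y^{i-1},y^i)$ so that the orbit is $d$-Cauchy and its limit is fixed by continuity. The orbit argument is more elementary in that it invokes only the completeness of the original space; your version buys a cleaner conceptual statement (the theorem literally becomes Banach's principle in the metric $d_\varepsilon$) at the cost of the completeness-transfer lemma, which you handle correctly. Either way the result stands, and since your $d_\varepsilon$-fixed point is a fixed point of the map itself, uniqueness in $(X,d)$ is immediate without the final chaining remark you add.
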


 In \cite{N6}, Nadler extended this result to multi-valued mappings. Let $(X,d)$ be a metric space and  $H$ be the Hausdorff metric with respect to $d$ on the family $CB(X)$ of all non empty closed and bounded subsets of $X$. Nadler \cite{N6} generalized the above result by deriving the following theorem.
\begin{theorem}\cite{N6}
Let $(X,d)$ be a complete metric space and $F:X \rightarrow CB(X)$ be a mapping such that for all~$x,y \in X$~with $d(x,y)<\varepsilon$,
\begin{equation}
H(F(x),F(y))\leq kd(x,y)\qquad~\textrm{for~some}~k \in [0,1).
\end{equation}
Then the map $F$ has a fixed point if $(X,d)$ is $\varepsilon$-chainable.
\end{theorem}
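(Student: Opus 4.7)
The plan is to reduce this locally contractive setting to the globally contractive one via the standard chainable metric, and then invoke Nadler's original (global) fixed point theorem stated at the beginning of the introduction. Concretely, I would define
\[
d_\varepsilon(x,y) \;=\; \inf\Bigl\{\,\sum_{i=1}^{N} d(y^{i-1},y^{i}) \;:\; (y^{i})_{i=0}^{N} \text{ is an }\varepsilon\text{-chain from }x\text{ to }y\Bigr\},
\]
which is finite for every pair because $(X,d)$ is $\varepsilon$-chainable.

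First I would verify that $d_\varepsilon$ is a metric (symmetry is clear; the triangle inequality comes from concatenating chains), that $d(x,y) \leq d_\varepsilon(x,y)$ always, and that $d(x,y) = d_\varepsilon(x,y)$ whenever $d(x,y) < \varepsilon$ (since the one-step chain is then admissible and any longer chain has length at least $d(x,y)$). This local agreement forces $(X,d)$ and $(X,d_\varepsilon)$ to have identical topologies, whence $(X,d_\varepsilon)$ inherits completeness from $(X,d)$ and the $d$-closed bounded sets are also $d_\varepsilon$-closed.

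The crux of the argument is to upgrade the local Nadler estimate to a global one in the chainable metric, namely $H_\varepsilon(F(x),F(y)) \leq k\, d_\varepsilon(x,y)$, where $H_\varepsilon$ is the Hausdorff metric induced by $d_\varepsilon$. Fix $x,y \in X$, pick an $\varepsilon$-chain $x=y^{0},y^{1},\ldots,y^{N}=y$ of total length close to $d_\varepsilon(x,y)$, fix any $\lambda \in (k,1)$, and take any $u^{0} \in F(x)$. Since $H(F(y^{i-1}),F(y^{i})) \leq k\, d(y^{i-1},y^{i})$, the standard Hausdorff-approximation lemma yields $u^{i} \in F(y^{i})$ with $d(u^{i-1},u^{i}) \leq \lambda\, d(y^{i-1},y^{i})$. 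Because $\lambda < 1$ and $d(y^{i-1},y^{i}) < \varepsilon$, each such step satisfies $d(u^{i-1},u^{i}) < \varepsilon$, so $(u^{i})_{i=0}^{N}$ is itself an $\varepsilon$-chain. Summing gives $d_\varepsilon(u^{0},u^{N}) \leq \lambda\sum_{i=1}^{N} d(y^{i-1},y^{i})$, and taking the infimum over chains followed by $\lambda \downarrow k$ and a symmetric argument delivers the desired global Lipschitz estimate. An application of the Nadler global contraction theorem to $F$ on the complete metric space $(X,d_\varepsilon)$ then produces a fixed point, which by construction is also a fixed point of $F$ on $(X,d)$.

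The main obstacle I anticipate is precisely this lifting step: one must ensure that the points $u^{i}$ chosen successively in $F(y^{i})$ remain within $\varepsilon$ of their predecessors, so that the lifted sequence is an admissible $\varepsilon$-chain; this is where the multiplicative slack $\lambda < 1$ is essential, and where the possibility that $d_\varepsilon$ could blow up on the individual sets $F(y)$ must be handled. A minor technical point is to confirm that $F$ actually takes values in the closed and bounded subsets of $(X,d_\varepsilon)$; this can either be checked by a diameter-of-chain argument on each $F(y)$, or circumvented altogether by running a direct Picard-type construction $x_{n+1} \in F(x_n)$ using the Hausdorff estimate above to produce a $d_\varepsilon$-Cauchy sequence whose limit must, by closedness of $F(x^{*})$ and the contraction estimate, belong to $F(x^{*})$.
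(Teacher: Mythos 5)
The paper does not actually prove this statement: it is quoted as background and attributed to Nadler \cite{N6}, so there is no in-paper argument to compare against. Your proposal is, in substance, the canonical proof (and essentially Nadler's original one): pass to the chainable metric $d_\varepsilon$, show $d=d_\varepsilon$ on pairs with $d(x,y)<\varepsilon$, lift the local Hausdorff estimate to a global one for $H_\varepsilon$ by choosing points $u^i\in F(y^i)$ along a near-optimal chain with the slack $\lambda\in(k,1)$, and conclude. The lifting step is sound (including the observation that $d(u^{i-1},u^i)\leq\lambda d(y^{i-1},y^i)<\varepsilon$ keeps the lifted sequence an admissible chain; the degenerate case $d(y^{i-1},y^i)=0$ forces $F(y^{i-1})=F(y^i)$ and is harmless), and you correctly identify the one genuine obstruction, namely that a $d$-bounded set need not be $d_\varepsilon$-bounded, so $F$ need not map into $CB(X,d_\varepsilon)$ and the global theorem cannot be invoked verbatim; your fallback of running the Picard iteration $x_{n+1}\in F(x_n)$ directly with the excess estimate $\sup_{u\in F(x)}d_\varepsilon(u,F(y))\leq k\,d_\varepsilon(x,y)$ closes that gap (Nadler's original statement sidesteps it by taking compact values). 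Two small points of hygiene: completeness is not a topological property, so ``identical topologies'' does not by itself transfer completeness --- argue instead that a $d_\varepsilon$-Cauchy sequence is $d$-Cauchy via $d\leq d_\varepsilon$, and that its $d$-limit is a $d_\varepsilon$-limit because the two metrics agree once $d(x_n,x)<\varepsilon$; and the final membership $x^*\in F(x^*)$ should be drawn from $d(x^*,F(x^*))\leq d(x^*,x_{n+1})+H(F(x_n),F(x^*))\leq d(x^*,x_{n+1})+k\,d(x_n,x^*)\rightarrow 0$ together with $d$-closedness of $F(x^*)$.
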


Recently, Eldred et al. \cite{Antony_Veeramani} explored some spaces on which the Mizoguchi-Takahashi contraction is equivalent to Nadler contraction. They have proved that Mizoguchi-Takahashi's condition reduces to a multi-valued contraction in a metrically convex space. Also, they have derived the equivalence in a compact metric space.

In this paper, we have shown that the multi-valued map satisfying the Mizoguchi-Takahashi's contractive condition uniformly locally~ is equivalent to a uniformly locally~contractive multi-valued map due to Nadler \cite{N6} on a metrically convex space.

\section{Preliminaries}
In this section we give some definitions and notations which are useful and related to context of our results.

Let $(X,d)$ be a metric space and $p,q$ be any two arbitrary points in $X$. A point $r\in X$ is called \textit{metrically} between $p$ and $q$ if $d(p,q)=d(p,r)+d(r,q)$ where $p \neq q \neq r$. We say the metric space $(X,d)$ \textit{metrically convex} if for any two arbitrary points $p$ and $q$, there is a point $r$ in $X$ which is metrically between $p$ and $q$.

A subset $M$ of a metric space $(X,d)$ is said to be a \textit{metric segment} with joining points $p,q \in M$ if there is a closed and bounded interval $[a,b] \subseteq \mathbb{R}$ and an isometry $\phi$ from the set $[a,b]$ onto the set $M$ such that $\phi(a)=p$ and $\phi(b)=q$.

Now, we recall the following result due to Khamsi and Kirk \cite{Khamsi_Kirk} which will be used in our main result.
\begin{theorem}
Let $(X,d)$ be complete metrically convex metric space. Then any two arbitrary points in $X$ are the joining points of at least one metric segment in $X$.
\end{theorem}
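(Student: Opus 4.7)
The plan is to construct, given $p,q \in X$ with $\ell := d(p,q) > 0$, an isometry $\phi:[0,\ell] \to X$ with $\phi(0)=p$ and $\phi(\ell)=q$; then $M := \phi([0,\ell])$ is the required metric segment. I would build $\phi$ first on the dyadic rationals in $[0,\ell]$ and then extend by uniform continuity using completeness of $X$.

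The first step is a midpoint lemma: for any $x,y \in X$ with $d(x,y)=\lambda$, there exists $m \in X$ with $d(x,m)=d(m,y)=\lambda/2$. Metric convexity only supplies \emph{some} intermediate point $r$ with $d(x,r)+d(r,y)=\lambda$, not a midpoint. To upgrade this, I would construct a sequence $(r_n)$ of approximate midpoints as follows: set $r_1$ to be any intermediate point, and at each stage apply metric convexity on whichever of the segments $[x,r_n]$ or $[r_n,y]$ is longer to produce $r_{n+1}$ whose discrepancy $|d(x,r_{n+1})-\lambda/2|$ is at most half the previous discrepancy. A careful estimate together with the betweenness relations shows $(r_n)$ is Cauchy, so by completeness it converges to a point $m$ which by continuity of the metric satisfies $d(x,m)=d(m,y)=\lambda/2$.

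Once midpoints exist, I would define $\phi$ on $D := \{k\ell/2^n : n\ge 0,\ 0\le k \le 2^n\}$ by recursion: set $\phi(0)=p$, $\phi(\ell)=q$, and having defined $\phi(k\ell/2^n)$ and $\phi((k{+}1)\ell/2^n)$ with distance $\ell/2^n$ between them, set $\phi((2k{+}1)\ell/2^{n+1})$ to be a midpoint of the two. A simultaneous induction shows the invariant that for all $0\le i < j \le 2^n$,
\[
d\bigl(\phi(i\ell/2^n),\phi(j\ell/2^n)\bigr) = (j-i)\,\ell/2^n.
\]
The nontrivial direction is $\ge$: the triangle inequality gives $\le$, while a strict inequality anywhere would, summed along the chain $0, \ell/2^n, 2\ell/2^n, \dots, \ell$, contradict $d(p,q)=\ell$. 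Hence $\phi$ is an isometry on the dense subset $D \subseteq [0,\ell]$.

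Finally, because $\phi$ is in particular $1$-Lipschitz on $D$, it is uniformly continuous, and since $X$ is complete, $\phi$ extends uniquely to a continuous map $\overline{\phi}:[0,\ell]\to X$. Continuity of $d$ shows the extension remains an isometry on all of $[0,\ell]$, so $M = \overline{\phi}([0,\ell])$ is a metric segment with joining points $p$ and $q$. The main obstacle is the midpoint lemma: metric convexity is a purely existential statement about \emph{some} intermediate point, and the upgrade to a genuine bisector requires the approximation-plus-completeness argument above. Once midpoints are in hand, the dyadic construction and the rigidity argument forcing equality in the triangle inequality are essentially routine.
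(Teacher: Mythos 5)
Your overall architecture (a midpoint lemma, then dyadic subdivision, then extension by completeness) is a legitimate route to this statement, and the second and third steps are fine: the rigidity argument forcing equality in the triangle inequality along the dyadic chain, and the $1$-Lipschitz extension to $[0,\ell]$, are both correct. Note, however, that the paper itself offers no proof --- it quotes the statement from Khamsi and Kirk, where it is Menger's theorem and is proved by a quite different, global argument: one applies Zorn's lemma to obtain a maximal subset $E$ containing $p$ and $q$ on which $z\mapsto d(p,z)$ is an isometry onto a subset of $[0,d(p,q)]$, shows $E$ may be taken closed using completeness, and then uses convexity to insert a new point into any gap of $d(p,E)$, contradicting maximality.

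The genuine gap in your proposal sits exactly at the step you yourself flag as the main obstacle, and your fix does not work. Metric convexity asserts only that \emph{some} point $r$ with $d(x,r)+d(r,y)=d(x,y)$ exists; it gives no control over the value of $d(x,r)$. When you ``apply metric convexity on whichever of $[x,r_n]$ or $[r_n,y]$ is longer to produce $r_{n+1}$ whose discrepancy is at most half the previous one,'' you are tacitly assuming that among the points metrically between $x$ and $r_n$ there is one whose distance from $x$ lands in the window $[\lambda/2-\delta_n/2,\ \lambda/2+\delta_n/2]$. Nothing in the hypothesis guarantees this: the set of realized distances $\{d(x,w): w \text{ between } x \text{ and } r_n\}$ is only known to contain $0$, $d(x,r_n)$, and at least one interior value, and it may miss your target window entirely, so the iteration can stall or overshoot with no bound on the new discrepancy. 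This is not cosmetic --- producing exact (or even approximate) midpoints is essentially the whole content of Menger's theorem, which is why the standard proofs build the entire segment at once by the maximality argument rather than by bisection. (A secondary caution: even granted $\delta_n\to 0$, ``approximate midpoints form a Cauchy sequence'' is false in general --- on a circle with arc-length metric two antipodal points have two exact midpoints at maximal distance from each other --- so your Cauchy claim must lean explicitly on the betweenness relations linking consecutive $r_n$.) To repair the proof you would need either a genuinely different argument for the midpoint lemma or the Zorn-type construction directly.
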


For the given metric space $(X,d)$, the notation $CB(X)$ denotes the family of all non empty closed and bounded subsets of $X$. For $A,B \in CB(X)$, let
$$
H(A,B)=\max\left\{\sup_{a \in A}d(a,B), \sup_{b \in B}d(b,A)\right\},
$$
where $d(a,B)=inf_{b \in B}d(a,b)$.
Then the map $H$ is a metric on $CB(X)$ which is called the $Hausdorff~metric$ induced by $d$.
%
%
%

\section{Main results}
The following result is the main result proved by us in this article.
\begin{theorem} \label{Th 1}
Let $(X,d)$ be a metrically convex complete metric space and $F:X\rightarrow CB(X)$ be such that $\forall x,y \in X$ with $d(x,y)<\varepsilon$ (where $\varepsilon>0$),
\begin{equation}
H(F(x),F(y))\leq \alpha(d(x,y))d(x,y),\label{eq 1 MT}
\end{equation}
where $\alpha:[0,\infty)\rightarrow [0,1)$ is such that $\limsup_{s \rightarrow t+}\alpha(s)<1$ for every $t\in[0,\infty)$.
Then $F$ is an $(\varepsilon,k)$-\textit{uniformly~locally~contractive~multi-valued mapping} (for some $k\in[0,1)$).
\end{theorem}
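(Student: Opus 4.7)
The plan is to exploit the hypothesis $\limsup_{s \to 0+}\alpha(s) < 1$ at the single point $t = 0$, and then transfer the bound from very small distances to all distances less than $\varepsilon$ by subdividing metric segments provided by metric convexity.

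First, I would extract a uniform contraction constant on a small scale. Since $\limsup_{s \to 0+}\alpha(s) < 1$, there exist $k \in [0,1)$ and $\delta \in (0,\varepsilon)$ such that $\alpha(s) \le k$ for every $s \in (0,\delta)$. This is the only use of the Mizoguchi--Takahashi hypothesis: for pairs $u,v$ with $0 < d(u,v) < \delta$ the contractive inequality \eqref{eq 1 MT} immediately gives $H(F(u),F(v)) \le k\, d(u,v)$. The case $u=v$ is trivial.

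Next, I would upgrade this bound from pairs at distance $< \delta$ to pairs at distance $< \varepsilon$ using metric convexity. Fix $x,y \in X$ with $0 < d(x,y) < \varepsilon$. By the Khamsi--Kirk theorem (stated above), there is a metric segment $M \subseteq X$ joining $x$ and $y$, i.e.\ an isometry $\phi:[0,d(x,y)] \to M$ with $\phi(0)=x$ and $\phi(d(x,y))=y$. Choose an integer $n$ with $d(x,y)/n < \delta$, and set $z_i := \phi(i\, d(x,y)/n)$ for $0 \le i \le n$. Since $\phi$ is an isometry, $d(z_{i-1},z_i) = d(x,y)/n < \delta < \varepsilon$, so the Mizoguchi--Takahashi assumption applies to each consecutive pair and yields $H(F(z_{i-1}),F(z_i)) \le k\, d(z_{i-1},z_i)$.

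Finally, summing these estimates via the triangle inequality for the Hausdorff metric on $CB(X)$ gives
\[
H(F(x),F(y)) \;\le\; \sum_{i=1}^{n} H(F(z_{i-1}),F(z_i)) \;\le\; k \sum_{i=1}^{n} d(z_{i-1},z_i) \;=\; k\, d(x,y),
\]
which is exactly the $(\varepsilon,k)$-uniformly locally contractive condition. The main delicate point is step one, namely that the local hypothesis at $t=0$ is strong enough to produce a uniform $k$; once a uniform $k$ is available on some scale $\delta$, metric convexity plus the additivity of length along a metric segment makes the extension to the full range $[0,\varepsilon)$ essentially automatic, so no further appeal to the behaviour of $\alpha$ away from $0$ is needed.
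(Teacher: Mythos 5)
Your proof is correct and follows the same strategy as the paper: extract a uniform constant $k<1$ valid at small scales, then chain the local estimate along a metric segment, using the additivity of distances under the isometry and the triangle inequality for $H$ to recover $k\,d(x,y)$. The only difference is organizational: you obtain $k$ and $\delta$ directly from $\limsup_{s\to 0+}\alpha(s)<1$, whereas the paper runs a two-case analysis on the set $P=\{p>0:\sup\{\alpha(d(x,y)):0\le d(x,y)\le p\}=1\}$ --- its Case~2 argument that $\inf P>0$ is precisely your first step in contrapositive form, and your version shows the case split (and any appeal to $\alpha$ away from $0$) is unnecessary.
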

\begin{proof}
Our aim here is to prove that $F$ is an $(\varepsilon,k)$-uniformly~locally~contractive~multi-valued mapping, that is,
there exists $k \in [0,1)$ such that $H(F(x),F(y))\leq kd(x,y)~\forall x,y \in X~\textrm{with}~d(x,y)<\varepsilon$. Let us consider a
subset $P$ of real numbers where
$$P=\{p>0: \sup\{\alpha(d(x,y)): 0 \leq d(x,y) \leq p\}=1\}.$$
Then the following two cases occurs. Our aim is to show that $F$ becomes an $(\varepsilon,k)$-uniformly~locally~contractive~mapping in
each case. Let $x_{1}$, $x_{2}$ be two arbitrary elements in $X$ such that $d(x_{1},x_{2})<\varepsilon$.\vspace{0.2 cm}\\
\textbf{Case 1:} $P=\emptyset$.\\
Therefore $\sup\{\alpha(d(x,y)): 0 \leq d(x,y) \leq p\}<1$ for any $p>0$. Let $q$ be a fixed positive real number. Let us suppose that
$\sup\{\alpha(d(x,y)): 0 \leq d(x,y) \leq q\}=k$. Clearly, $k<1$. Hence, it follows from equation (\ref{eq 1 MT}) that if $x$ and $y$ be two elements in  $X$ with $d(x,y)<\min\{\varepsilon,q\}$, then
\begin{equation}
H(F(x),F(y))\leq \alpha(d(x,y))d(x,y) \leq kd(x,y).\label{eq 2 Th1}
\end{equation}
If $q \geq \varepsilon$, then $\min\{\varepsilon,q\}=\varepsilon$. Hence $d(x_{1},x_{2})<\min\{\varepsilon,q\}$. By (\ref{eq 2 Th1}), we have
$$H(F(x_{1}),F(x_{2}))\leq \alpha(d(x_{1},x_{2}))d(x_{1},x_{2}) \leq kd(x_{1},x_{2}).$$
Suppose that $q<\varepsilon$. Then $\min\{\varepsilon,q\}=q$. Since $X$ is metrically convex metric space, we can find $a,b \in \mathbb{R}$ and an isometry $\phi:[a,b]
\rightarrow X$ such that $\phi(a)=x_{1}$ and $\phi(b)=x_{2}$. For some $r$ with $0<r<q$, there exists a positive integer $m$ such that
 \begin{eqnarray*}
d(x_{1},x_{2})&=&d(\phi(a),\phi(b))=d(a,b)\\
&=& d(a,a+r)+d(a+r,a+2r)+\cdots+d(a+mr,b)\\
&=& d(x_{1},\phi(a+r))+d(\phi(a+r),\phi(a+2r))+\cdots+d(\phi(a+mr),x_{2}),
\end{eqnarray*}
where $m$ is such that $a+mr<b<a+(m+1)r$. Now,
$d(x_{1},\phi(a+r))=d(a,a+r)=r<\min\{\varepsilon,q\}$ and
hence we have from equation (\ref{eq 2 Th1}) that
$$ H(F(x_{1}),F(\phi(a+r))))\leq kd(x_{1},\phi(a+r))).$$
 Similarly, for any natural number $n<m$, we have $d(\phi(a+nr),\phi(a+(n+1)r))<\min\{\varepsilon,q\}$ and
hence by (\ref{eq 2 Th1}), we get
\begin{eqnarray*}
H(F(\phi(a+nr)),F(\phi(a+(n+1)r))))
 \leq kd(\phi(a+nr),\phi(a+(n+1)r))).
\end{eqnarray*}
Moreover, $d(\phi(a+mr),x_{2})<r<\min\{\varepsilon,q\}$ and thus by using (\ref{eq 2 Th1}) we have
$$H(F(\phi(a+mr)),F(x_{2}))\leq kd(\phi(a+mr),x_{2}).$$
Thus for any $x_{1},x_{2} \in X$ with $d(x_{1},x_{2})<\varepsilon$, we have
\begin{eqnarray*}
H(F(x_{1}),F(x_{2}))
&\leq & H(F(x_{1}),F(\phi(a+r)))+\cdots+H(F(\phi(a+mr)),F(x_{2}))\\
 &\leq & k[d(x_{1},\phi(a+r))+d(\phi(a+r),\phi(a+2r))+\cdots+d(\phi(a+mr),x_{2})]\\
 &=& kd(x_{1},x_{2}).
\end{eqnarray*}
\textbf{Case 2:} $P\neq \emptyset$.\\
Let $p_{0}=\inf P$. If  $p_{0}=0$, then we can find a sequence $\{p_{n}\}_{n}$ of $P$ such that
$p_{n} \rightarrow 0$ as $n \rightarrow \infty$. Since $p_{n} \in P$ for all $n$, $\sup\{\alpha(d(x,y)): 0 \leq d(x,y) \leq p_{n}\}=1$ for all $n \in \mathbb{N}$. By the properties of supremum, there exists a sequence $\{(x_{n},y_{n})\}_{n}$ such that $d(x_{n},y_{n}) \leq p_{n}$ and $\alpha(d(x_{n},y_{n}))>1-\frac{1}{n}$.
Hence $1-\frac{1}{n}\leq \alpha(d(x_{n},y_{n})) \leq 1$ for all $n \in \mathbb{N}$. Thus $d(x_{n},y_{n}) \rightarrow 0$ and $\alpha(d(x_{n},y_{n}))\rightarrow 1$
as $n \rightarrow \infty$. This is a contradiction as $\alpha:[0,\infty)\rightarrow [0,1)$ with $\limsup_{s \rightarrow t+}\alpha(s)<1$  $\forall t\in[0,\infty)$. Therefore $p_{0}\neq 0$. Thus for any $0<q_{0}<p_{0}$, $\sup\{\alpha(d(x,y)): 0 \leq d(x,y) \leq q_{0}\}=k_{0}<1$. Hence, it follows from equation (\ref{eq 1 MT}) that for any two elements $x$ and $y$ in  $X$ with $d(x,y)<\min\{\varepsilon,q_{0}\}$
\begin{equation}
H(F(x),F(y))\leq \alpha(d(x,y))d(x,y) \leq k_{0}d(x,y). \label{MT_Eq3}
\end{equation}
If $q_{0} \geq \varepsilon$, then $d(x_{1},x_{2})<\varepsilon=\min\{\varepsilon,q_{0}\}$. Hence from equation (\ref{MT_Eq3})
$$H(F(x_{1}),F(x_{2}))\leq \alpha(d(x_{1},x_{2}))d(x_{1},x_{2}) \leq k_{0}d(x_{1},x_{2}).$$
Let us assume that $q_{0}<\varepsilon$. Since $X$ is metrically convex metric space, we can find $a_{0},b_{0} \in \mathbb{R}$ and an isometry $\phi_{0}:[a_{0},b_{0}]
\rightarrow X$ such that $\phi_{0}(a_{0})=x_{1}$ and $\phi_{0}(b_{0})=x_{2}$. Similar to case 1, it follows that for some $r_{0}$ with $q_{0}>r_{0}>0$, there exists a positive integer $m_{0}$ such that
 \begin{eqnarray*}
d(x_{1},x_{2})= d(x_{1},\phi_{0}(a_{0}+r_{0}))+\cdots+d(\phi_{0}(a_{0}+m_{0}r_{0}),x_{2}),
\end{eqnarray*}
where $m_{0}$ is such that $a_{0}+m_{0}r_{0}<b_{0}<a_{0}+(m_{0}+1)r_{0}$. Moreover, for any non-negative integer $0\leq n_{0}<m_{0}$, we have similar to Case 1 that $d(\phi_{0}(a_{0}+n_{0}r_{0}),\phi_{0}(a_{0}+(n_{0}+1)r_{0}))<\min\{\varepsilon,q_{0}\}$ and hence by (\ref{MT_Eq3})
\begin{eqnarray*}
H(F(\phi_{0}(a_{0}+n_{0}r_{0})),F(\phi_{0}(a_{0}+(n_{0}+1)r_{0}))))
 \leq k_{0}d(\phi_{0}(a_{0}+n_{0}r_{0}),\phi_{0}(a_{0}+(n_{0}+1)r_{0}))).
\end{eqnarray*}
Thus for any $x_{1},x_{2} \in X$ with $d(x_{1},x_{2})<\varepsilon$, we have
\begin{eqnarray*}
&H\left(F(x_{1}),F(x_{2})\right) &\leq  H(F(x_{1}),F(\phi_{0}(a_{0}+r_{0})))+\cdots+H(F(\phi_{0}(a_{0}+m_{0}r_{0})),F(x_{2}))\\
 & &\leq  k_{0}\left[d(x_{1},\phi_{0}(a_{0}+r_{0}))+\cdots+d(\phi_{0}(a_{0}+m_{0}r_{0}),x_{2})\right]= k_{0}d(x_{1},x_{2}).
\end{eqnarray*}
Therefore $F$ is an $(\varepsilon,k)$-uniformly~locally~contractive~multi-valued mapping in both cases. Hence the proof is complete.
\end{proof}

\end{document}